\newtheorem{theorem}{Theorem}
\newtheorem{definition}[theorem]{Definition}
\newtheorem{proposition}[theorem]{Proposition}
\newtheorem{corollary}[theorem]{Corollary}
\newtheorem{example}[theorem]{Example}
\newtheorem{remark}[theorem]{Remark}
\begin{document}

\markboth{Aicha Benkartab and Ahmed Mohammed Cherif}
{Geometry of Harmonic Identity Maps}

%

%

\title{Geometry of Harmonic Identity Maps}

\author{AICHA BENKARTAB\\
Department of  Mathematics, Mascara University, Mascara 29000, Algeria\\
aicha\_benkartab@yahoo.com\\\\
AHMED MOHAMMED CHERIF\\
Department of  Mathematics, Mascara University, Mascara 29000, Algeria\\
med\_cherif\_ahmed@yahoo.fr}

\date{}

\maketitle

\begin{abstract}
An identity map $(M,g)\longrightarrow(M,g)$ is a harmonic from a Riemannian manifold $(M,g)$ onto itself.
In this paper, we study the harmonicity of identity maps $(M,g)\longrightarrow(M,g-df\otimes df)$ and
$(M,g-df\otimes df)\longrightarrow(M,g)$ where $f$ is a smooth function with gradient norm $<1$ on $(M,g)$.
We construct new examples of identity harmonic maps. We define a symmetric tensor field on $M$ whose properties are related to the harmonicity of these identity maps.\\\\
\textit{ keywords:} Riemannian geometry; Identity map; Harmonic map.\\
\textit{ Mathematics Subject Classification 2020:} 53C20; 58E20.
\end{abstract}

\section{Introduction}	
Let $(M, g)$ and $(N,h)$ be two Riemannian manifolds. The energy functional of a map $\varphi\in C^\infty(M,N)$ is defined by
\begin{equation}\label{eq1.1}
    E(\varphi)=\frac{1}{2}\int_M|d\varphi|^2 v^g,
\end{equation}
where $|d\varphi|$ is the Hilbert-Schmidt norm of the differential $d\varphi$, and $v^g$ is the volume element on $(M,g)$. A map $\varphi\in C^\infty(M,N)$ is called harmonic if it is a critical point of the energy functional, that is, if it is a solution of the Euler Lagrange equation associated to (\ref{eq1.1})
\begin{equation}\label{eq1.2}
\tau(\varphi)=\operatorname{trace}_g\nabla d\varphi
=\sum_{i=1}^m\left[\nabla^\varphi _{e_i} d\varphi(e_i)-d\varphi(\nabla^M _{e_i}e_i)\right]=0,
\end{equation}
where $\{e_i\}_{i=1}^m$ is an orthonormal local frame on $(M,g)$, $\nabla^{M}$ the Levi-Civita connection of $(M,g)$,
and $\nabla^{\varphi}$ denote the pull-back connection on $\varphi^{-1}TN$.
Harmonic maps are solutions of a second order nonlinear elliptic system. One can refer to \cite{EL1,EL2,eells} for background on
harmonic maps.\\
In \cite{ba,ba2}, the authors studied biharmonic maps between
Riemannian manifolds equipped with Riemannian metrics of the form $g+df\otimes df$ for some smooth function $f$,
in particular they gave the condition for the biharmonicity of the identity maps.
In this paper, we deform the domain metric (resp. the codomain metric) of the identity map $(M,g)\longrightarrow(M,g)$
by $\tilde{g}= g-df\otimes df$ where $f \in C^\infty(M)$ such that $\| \operatorname{grad} f\|<1$ in order to render the identity map $(M,\tilde{g})\longrightarrow(M,g)$ (resp. $(M,g)\longrightarrow(M,\tilde{g})$) a harmonic map.
We give new examples and properties of harmonic identity maps.

\section{The harmonicity of the identity maps}
Let $M$  be a smooth manifold equipped with Riemannian metric $g$, and
$f$ a smooth function on $M$ such that the following deformation $\tilde{g}= g- df\otimes df$
is a Riemannian metric on $M$, that is, it is positive definite.
\begin{remark}
 Let $M$ be a connected $m$-dimensional manifold. A continuous function $f: M\longrightarrow \mathbb{R}$ is called proper if $f^{-1}(K) $ is compact for every compact $K$ in $\mathbb{R}$. A Riemannian manifold $(M,g)$ is complete if and only
 if there exists a proper smooth function $f: M\longrightarrow \mathbb{R}$ such that $g - df \otimes df$ is positive definite (see \cite{U1994}).
\end{remark}
In \cite{MM23}, B. Merdji and A. Mohammed Cherif proved that, if
$$\| \operatorname{grad} f\|^{2}=g(\operatorname{grad} f,\operatorname{grad} f)<1,$$
on $M$, where $\operatorname{grad}$ is the gradient operator with respect to $g$, then
$\tilde{g}$ is a Riemannian metric on $M$. Moreover, the Levi-Civita connection of $(M,\tilde{g})$ is given by
\begin{equation}\label{eq2.1}
\widetilde{\nabla}_{X}Y=\nabla_{X}Y-\frac{\operatorname{Hess}_{f}(X,Y)}{1-\| \operatorname{grad} f\|^{2}}\operatorname{grad}f,\quad\forall X,Y\in\Gamma(TM),
\end{equation}
where $\nabla$ is the Levi-Civita connection of $(M,g)$, and $\operatorname{Hess}_{f}$ denote the Hessian of $f$ with respect to $g$.\\
First, we deform the codomain metric of the identity map $I:(M,g) \longrightarrow (M,g)$
by $\tilde{g}= g-df\otimes df$, where $f \in C^\infty(M)$ such that $\|\operatorname{grad}f\|<1$ on $M$. We denote by
\begin{eqnarray*}
  \tilde{I}_c:(M,g) &\longrightarrow& (M,\tilde{g}), \\
  x &\longmapsto& x
\end{eqnarray*}
In the following Theorem, we compute the tension field of $\tilde{I}_c$.
\begin{theorem}\label{th1}
The tension field of the identity map $\tilde{I}_c:(M,g) \longrightarrow (M,\tilde{g})$ is given by
\begin{eqnarray*}
\tau(\tilde{I}_c)
&=&-\frac{\Delta f}{1-\|\operatorname{grad}f\|^{2}}\operatorname{grad}f,
\end{eqnarray*}
where $\Delta f$ is the Laplacian of $f$ with respect to $g$.
\end{theorem}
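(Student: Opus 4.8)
The plan is to begin from the defining formula \eqref{eq1.2} for the tension field and specialize it to the identity map. Since the domain and codomain share the same underlying manifold, the pull-back bundle $\tilde{I}_c^{-1}TM$ is canonically identified with $TM$, the differential $d\tilde{I}_c$ acts as the identity on each tangent space, and the pull-back connection $\nabla^{\tilde{I}_c}$ is precisely the Levi-Civita connection $\widetilde{\nabla}$ of the codomain metric $\tilde{g}$. With a $g$-orthonormal local frame $\{e_i\}_{i=1}^m$, and recalling that the trace in \eqref{eq1.2} is taken with respect to the domain metric $g$, equation \eqref{eq1.2} reduces to $\tau(\tilde{I}_c)=\sum_{i=1}^m\bigl[\widetilde{\nabla}_{e_i}e_i-\nabla_{e_i}e_i\bigr]$, where $\nabla$ is the Levi-Civita connection of $g$.

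The second step is to substitute the connection formula \eqref{eq2.1}. Evaluating \eqref{eq2.1} at $X=Y=e_i$ gives $\widetilde{\nabla}_{e_i}e_i-\nabla_{e_i}e_i=-\frac{\operatorname{Hess}_f(e_i,e_i)}{1-\|\operatorname{grad}f\|^2}\operatorname{grad}f$, in which the factor $\operatorname{grad}f$ and the scalar denominator are independent of the summation index, so only $\operatorname{Hess}_f(e_i,e_i)$ varies with $i$. Summing over $i$ and pulling out the common factor, I would then recognize $\sum_{i=1}^m\operatorname{Hess}_f(e_i,e_i)=\operatorname{trace}_g\operatorname{Hess}_f=\Delta f$, the Laplacian of $f$ with respect to $g$, yielding $\tau(\tilde{I}_c)=-\frac{\Delta f}{1-\|\operatorname{grad}f\|^2}\operatorname{grad}f$ as claimed.

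I do not anticipate a genuine obstacle here: the computation is short once \eqref{eq2.1} is available. The only points that demand care are conventional rather than substantive — correctly identifying the pull-back connection with $\widetilde{\nabla}$, so that it is the codomain geometry (not the domain) that enters the first term of \eqref{eq1.2}; ensuring the frame is orthonormal for $g$ rather than $\tilde{g}$, consistent with $\operatorname{trace}_g$; and matching the sign convention so that $\operatorname{trace}_g\operatorname{Hess}_f=\Delta f$.
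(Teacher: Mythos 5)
Your proposal is correct and follows essentially the same route as the paper's proof: specialize the tension field formula \eqref{eq1.2} to the identity map (identifying the pull-back connection with $\widetilde{\nabla}$ and the differential with the identity), substitute the connection comparison formula \eqref{eq2.1}, and recognize the trace of $\operatorname{Hess}_f$ as $\Delta f$. The only difference is that you make explicit the conventional points (pull-back bundle identification, which metric the trace uses) that the paper leaves implicit.
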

\begin{proof}
Let $\{e_i\}_{i=1}^m$ be a local orthonormal frame on $(M,g)$, we compute
\begin{eqnarray}\label{eq2.2}
\tau(\tilde{I}_c)
&=&\sum_{i=1}^m\left[\nabla^{\tilde{I}_c}_{e_{i}}d\tilde{I}_c(e_{i})-d\tilde{I}_c(\nabla_{e_{i}}e_{i})\right]\nonumber\\
&=&\sum_{i=1}^m\left[\widetilde{\nabla}_{e_{i}}e_{i}-{\nabla}_{e_{i}}e_{i}\right]\nonumber\\
&=&-\sum_{i=1}^m\frac{\operatorname{Hess}_{f}(e_{i},e_{i})}{1-\|\operatorname{grad}f\|^{2}}\operatorname{grad}f\nonumber\\
&=&-\frac{\Delta f}{1-\|\operatorname{grad}f\|^{2}}\operatorname{grad}f.
\end{eqnarray}
\end{proof}
From Theorem \ref{th1}, we deduce the following Corollaries.
\begin{corollary}
The identity map $\tilde{I}_c$ is harmonic if and only if $f$ is harmonic function on $(M,g)$.
\end{corollary}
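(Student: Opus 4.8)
The plan is to read the result directly off the formula for the tension field established in Theorem \ref{th1}, using the defining fact that $\tilde{I}_c$ is harmonic precisely when $\tau(\tilde{I}_c)$ vanishes identically. The forward implication is immediate: if $f$ is harmonic on $(M,g)$, then $\Delta f \equiv 0$, and substituting into $\tau(\tilde{I}_c)=-\frac{\Delta f}{1-\|\operatorname{grad}f\|^{2}}\operatorname{grad}f$ gives $\tau(\tilde{I}_c)=0$, so $\tilde{I}_c$ is harmonic. For the converse, I would first note that the scalar factor $\frac{1}{1-\|\operatorname{grad}f\|^{2}}$ is strictly positive everywhere on $M$, since the standing hypothesis $\|\operatorname{grad}f\|<1$ forces $1-\|\operatorname{grad}f\|^{2}>0$; hence this factor can be cancelled, and the vanishing of $\tau(\tilde{I}_c)$ is equivalent to the pointwise vector identity $(\Delta f)\,\operatorname{grad}f=0$ on $M$. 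On the open set $U=\{x\in M:\operatorname{grad}f(x)\neq 0\}$ this forces $\Delta f=0$ directly.

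The step I expect to be the main obstacle is that the identity $(\Delta f)\,\operatorname{grad}f=0$ carries no information at critical points of $f$, where $\operatorname{grad}f$ vanishes: one cannot conclude $\Delta f=0$ there by algebra alone. To handle this I would argue by continuity. Let $Z=\{x\in M:\operatorname{grad}f(x)=0\}$ be the closed critical set. On $\operatorname{int}(Z)$ the gradient vanishes on an open set, so $f$ is locally constant and therefore $\Delta f=0$ there as well. Thus $\Delta f\equiv 0$ on the open set $U\cup\operatorname{int}(Z)$, whose complement is the topological boundary $\partial Z$. Since $\partial Z=Z\setminus\operatorname{int}(Z)$ has empty interior, the set $U\cup\operatorname{int}(Z)$ is dense in $M$; as $\Delta f$ is continuous and vanishes on a dense subset, it vanishes on all of $M$, so $f$ is harmonic.

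In a more compressed write-up one may bypass the topological argument by simply cancelling the nonvanishing scalar $\frac{1}{1-\|\operatorname{grad}f\|^{2}}$ and the (generically nonzero) gradient to read off $\Delta f=0$; the density-and-continuity argument above is precisely what upgrades that heuristic cancellation to a rigorous statement valid even at the critical points of $f$. I would present the short version in the main text and keep the critical-set discussion in mind as the one genuine subtlety.
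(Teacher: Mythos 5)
Your proof is correct and follows the same route as the paper, which states this corollary without proof as an immediate consequence of the tension field formula in Theorem \ref{th1}. The one genuine subtlety you identify—that $(\Delta f)\,\operatorname{grad}f\equiv 0$ does not directly give $\Delta f=0$ at critical points of $f$, handled by your density-and-continuity argument on $U\cup\operatorname{int}(Z)$—is precisely the detail the paper's implicit one-line deduction glosses over, so your write-up is the rigorous version of the same argument rather than a different approach.
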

\begin{corollary}
Let $\epsilon\in(0,1]$. Then, $\tilde{I}_c:(M,g) \longrightarrow (M,g-df\otimes df)$
is harmonic if and only if the identity map $(M,g) \longrightarrow (M,g-\epsilon^2df\otimes df)$ is harmonic.
\end{corollary}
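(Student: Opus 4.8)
The plan is to recognize that the codomain metric $g-\epsilon^2 df\otimes df$ is exactly the deformation produced by Theorem \ref{th1} applied to the function $\epsilon f$ in place of $f$, and then to compare the two resulting tension fields. First I would set $h=\epsilon f$ and observe that $dh\otimes dh=\epsilon^2\, df\otimes df$, so that the identity map $(M,g)\longrightarrow(M,g-\epsilon^2 df\otimes df)$ is precisely $\tilde{I}_c$ for the function $h$.

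Before invoking Theorem \ref{th1}, I must check that $g-dh\otimes dh$ is a genuine Riemannian metric, i.e.\ that $\|\operatorname{grad}h\|<1$. This follows immediately, since $\|\operatorname{grad}h\|=\epsilon\|\operatorname{grad}f\|\le\|\operatorname{grad}f\|<1$, where I use $\epsilon\in(0,1]$ together with the standing hypothesis $\|\operatorname{grad}f\|<1$. Applying Theorem \ref{th1} with $h$ in place of $f$, and using $\Delta h=\epsilon\,\Delta f$ and $\operatorname{grad}h=\epsilon\,\operatorname{grad}f$, then yields the tension field of $(M,g)\longrightarrow(M,g-\epsilon^2 df\otimes df)$ as $-\dfrac{\epsilon^2\,\Delta f}{1-\epsilon^2\|\operatorname{grad}f\|^2}\operatorname{grad}f$.

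Comparing this with $\tau(\tilde{I}_c)=-\dfrac{\Delta f}{1-\|\operatorname{grad}f\|^2}\operatorname{grad}f$ from Theorem \ref{th1}, I observe that both tension fields are, pointwise, a strictly positive scalar times the same vector field $\Delta f\cdot\operatorname{grad}f$: indeed $\epsilon^2>0$, and both denominators are positive by the respective positive-definiteness conditions. Hence one tension field vanishes at a point exactly when the other does, so the two identity maps are harmonic simultaneously, which is the claimed equivalence.

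I do not expect any genuine obstacle here; the argument is a direct application of Theorem \ref{th1} combined with the substitution $f\mapsto\epsilon f$. The only point requiring a moment's care is verifying positive-definiteness of the deformed metric for the scaled function, which is precisely where the hypothesis $\epsilon\le 1$ enters.
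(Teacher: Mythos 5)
Your proof is correct and is exactly the deduction the paper intends: the corollary is presented as an immediate consequence of Theorem \ref{th1}, obtained just as you do by applying that theorem to $\epsilon f$ (after checking $\|\operatorname{grad}(\epsilon f)\|=\epsilon\|\operatorname{grad}f\|<1$, which is where $\epsilon\le 1$ enters) and noting that both tension fields are nowhere-vanishing scalar multiples of $\Delta f\,\operatorname{grad}f$. One trivial slip: the scalar factors $-\bigl(1-\|\operatorname{grad}f\|^{2}\bigr)^{-1}$ and $-\epsilon^{2}\bigl(1-\epsilon^{2}\|\operatorname{grad}f\|^{2}\bigr)^{-1}$ are strictly negative rather than positive, but since all that matters is that they never vanish, your equivalence argument is unaffected.
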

\begin{corollary}
Let $f_1,f_2 \in C^\infty(M)$ such that $\|\operatorname{grad}f_1\|+\|\operatorname{grad}f_2\|<1$ on $M$.
If the identity maps $(M,g) \longrightarrow (M,g-df_1\otimes df_1)$ and $(M,g) \longrightarrow (M,g-df_2\otimes df_2)$
are harmonic, then the identity map $(M,g) \longrightarrow (M,g-d(f_1+f_2)\otimes d(f_1+f_2))$ is harmonic.
\end{corollary}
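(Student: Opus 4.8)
The plan is to reduce everything to the characterization obtained in the first Corollary above, namely that the identity map $(M,g)\longrightarrow(M,g-df\otimes df)$ is harmonic precisely when $f$ is a harmonic function on $(M,g)$, i.e.\ $\Delta f=0$. Once the hypotheses are translated into this language, the conclusion becomes essentially a linearity statement for the Laplacian, so I expect the argument to be short.

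First I would verify that the target metric on the right-hand side is genuinely well-defined, i.e.\ that $g-d(f_1+f_2)\otimes d(f_1+f_2)$ is a Riemannian metric so that Theorem \ref{th1} and its Corollaries apply to $f_1+f_2$. Since $\operatorname{grad}(f_1+f_2)=\operatorname{grad}f_1+\operatorname{grad}f_2$, the triangle inequality for the norm induced by $g$ gives
\begin{equation*}
\|\operatorname{grad}(f_1+f_2)\|\le\|\operatorname{grad}f_1\|+\|\operatorname{grad}f_2\|<1,
\end{equation*}
using the hypothesis. Hence $f_1+f_2$ satisfies the gradient-norm condition under which the deformation is positive definite, and the earlier results are applicable to it.

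Next, by the first Corollary applied to $f_1$ and to $f_2$ separately, the harmonicity of the two given identity maps is equivalent to $\Delta f_1=0$ and $\Delta f_2=0$. Since the Laplacian with respect to $g$ is linear, $\Delta(f_1+f_2)=\Delta f_1+\Delta f_2=0$, so $f_1+f_2$ is a harmonic function on $(M,g)$. Applying the first Corollary once more, now to $f_1+f_2$, shows that $(M,g)\longrightarrow(M,g-d(f_1+f_2)\otimes d(f_1+f_2))$ is harmonic, which is the claim. Equivalently, I could substitute $\Delta(f_1+f_2)=0$ directly into the formula of Theorem \ref{th1} to read off $\tau=0$.

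I do not anticipate a real obstacle. The only step demanding any care is the well-definedness check, i.e.\ confirming through the triangle inequality that the sum still has gradient norm strictly below $1$, so that the codomain metric remains positive definite and Theorem \ref{th1} is legitimately invoked for $f_1+f_2$; the harmonicity itself then follows immediately from the linearity of $\Delta$.
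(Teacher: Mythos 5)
Your proposal is correct and matches the paper's intended argument: the paper states this corollary as an immediate consequence of Theorem \ref{th1} (via the first Corollary, harmonicity of $\tilde{I}_c$ is equivalent to $\Delta f=0$), so the proof is precisely your combination of the triangle inequality $\|\operatorname{grad}(f_1+f_2)\|\le\|\operatorname{grad}f_1\|+\|\operatorname{grad}f_2\|<1$ with the linearity of the Laplacian. Nothing is missing.
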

\begin{remark}
In Euclidean space (resp. hyperbolic space) for the identity map $\tilde{I}_c$ to be harmonic it is sufficient to choose a harmonic function in this space satisfying $\|\operatorname{grad}f\|<1$. For example,
 \begin{itemize}
   \item if $M=\mathbb{R}^{n}$ equipped with standard Riemannian metric $g=dx_1^2+...+dx_n^2$, the function $f(x_{1},...,x_{n})=a_{1}x_{1}+...+a_{n}x_{n}$ where $a_1,...,a_n\in\mathbb{R}$ is harmonic, and note that $\|\operatorname{grad}f\|<1$ for $a_1^2+...+a_n^2<1$. Here, $\tilde{g}_{ij}=\delta_{ij}-a_ia_j$ for all $i,j=1,...,n$;
   \item if $M=\mathbb{H}^n$ equipped with Riemannian metric $g=x_n^{-2}(dx_1^2+...+dx_n^2)$, the function $f(x_{1},...,x_{n})=a+bx_{n}^{n-1}$ where $a,b\in\mathbb{R}$ is harmonic.
       We have $$\|\operatorname{grad}f\|^2=b^2(n-1)^2x_n^{2(n-1)}.$$
       Hence, $\|\operatorname{grad}f\|<1$ on $M_b=\{x\in \mathbb{R}^n\,|\,0<b(n-1)x_n^{n-1}<1\}$. Thus,
       the identity map $\tilde{I}_c:(M_b,g) \longrightarrow (M_b,\tilde{g})$ is harmonic. Here,
       $$\tilde{g}=x_n^{-2}(dx_1^2+...+dx_{n-1}^2)+x_n^{-2}(1-b^2(n-1)^2x_n^{2(n-1)})dx_{n}^2.$$
 \end{itemize}
But, if $M$ is a compact Riemannian manifold (e.g., the sphere $\mathbb{S}^n$), by using Green's Theorem (see for example \cite{baird}), the identity map $\tilde{I}_c$ from $(M,g)$ into $(M,\tilde{g})$ is harmonic if and only if the function $f $ is constant.
 \end{remark}
Now, we denote by
\begin{eqnarray*}
  \tilde{I}_d:(M,\tilde{g}) &\longrightarrow& (M,g), \\
  x &\longmapsto& x
\end{eqnarray*}
the identity  map, where $\tilde{g}= g-df\otimes df$ for some smooth function $f$ on $M$ such that $\|\operatorname{grad}f\|<1$.
We obtain the following results.
\begin{theorem}\label{th2}
The identity map $\tilde{I}_d$ is harmonic if and only if
\begin{eqnarray*}
   \operatorname{Hess}_{f}(\operatorname{grad}f,\operatorname{grad}f)+(1-\|\operatorname{grad}f\|^{2})\Delta f=0.
\end{eqnarray*}
\end{theorem}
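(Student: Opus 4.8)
The plan is to compute the tension field of $\tilde{I}_d$ directly and to show that it is a scalar multiple of $\operatorname{grad}f$, the scalar being (up to a nowhere-vanishing factor) the left-hand side of the stated equation. Since $\tilde{I}_d$ is the identity of the underlying manifold, its differential sends a vector field to itself, so the tension field is assembled using a $\tilde{g}$-orthonormal frame on the domain while the target connection is the Levi-Civita connection $\nabla$ of $g$. Concretely, fixing a $\tilde{g}$-orthonormal local frame $\{\tilde{e}_i\}_{i=1}^m$, the pull-back connection $\nabla^{\tilde{I}_d}$ coincides with $\nabla$ and $d\tilde{I}_d(\tilde{e}_i)=\tilde{e}_i$, so that
\begin{equation*}
\tau(\tilde{I}_d)=\sum_{i=1}^m\left[\nabla_{\tilde{e}_i}\tilde{e}_i-\widetilde{\nabla}_{\tilde{e}_i}\tilde{e}_i\right].
\end{equation*}
Substituting the connection formula (\ref{eq2.1}) from \cite{MM23} for $\widetilde{\nabla}$ collapses each summand to a Hessian term, giving
\begin{equation*}
\tau(\tilde{I}_d)=\frac{\operatorname{grad}f}{1-\|\operatorname{grad}f\|^2}\sum_{i=1}^m\operatorname{Hess}_f(\tilde{e}_i,\tilde{e}_i)=\frac{\operatorname{grad}f}{1-\|\operatorname{grad}f\|^2}\operatorname{trace}_{\tilde{g}}\operatorname{Hess}_f.
\end{equation*}

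The heart of the argument is to evaluate $\operatorname{trace}_{\tilde{g}}\operatorname{Hess}_f$, the trace with respect to $\tilde{g}$ of a tensor naturally written in terms of $g$. I would avoid inverting $\tilde{g}$ by hand and instead build an adapted frame: at a point where $\operatorname{grad}f\neq0$, choose a $g$-orthonormal frame $\{e_i\}$ with $e_m=\operatorname{grad}f/\|\operatorname{grad}f\|$. Then $e_i(f)=g(e_i,\operatorname{grad}f)=0$ for $i<m$ and $e_m(f)=\|\operatorname{grad}f\|$, so $\tilde{g}=g-df\otimes df$ is already diagonal in this frame: the vectors $e_1,\dots,e_{m-1}$ remain $\tilde{g}$-unit while $\tilde{g}(e_m,e_m)=1-\|\operatorname{grad}f\|^2$. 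Hence a $\tilde{g}$-orthonormal frame is obtained by keeping $e_1,\dots,e_{m-1}$ and rescaling $\tilde{e}_m=e_m/\sqrt{1-\|\operatorname{grad}f\|^2}$. Computing the trace in this frame and using $\sum_{i=1}^m\operatorname{Hess}_f(e_i,e_i)=\Delta f$ together with $\operatorname{Hess}_f(e_m,e_m)=\operatorname{Hess}_f(\operatorname{grad}f,\operatorname{grad}f)/\|\operatorname{grad}f\|^2$ yields, after collecting the $e_m$-terms,
\begin{equation*}
\operatorname{trace}_{\tilde{g}}\operatorname{Hess}_f=\Delta f+\frac{\operatorname{Hess}_f(\operatorname{grad}f,\operatorname{grad}f)}{1-\|\operatorname{grad}f\|^2}.
\end{equation*}

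Putting the two computations together gives the closed form
\begin{equation*}
\tau(\tilde{I}_d)=\frac{1}{(1-\|\operatorname{grad}f\|^2)^2}\left[(1-\|\operatorname{grad}f\|^2)\Delta f+\operatorname{Hess}_f(\operatorname{grad}f,\operatorname{grad}f)\right]\operatorname{grad}f,
\end{equation*}
from which the stated criterion follows at once. I expect the main obstacle to be precisely the trace step: one must reconcile a Hessian defined through $g$ with a trace taken in $\tilde{g}$, and the adapted-frame device (equivalently, the Sherman--Morrison identity $\tilde{g}^{ij}=g^{ij}+(1-\|\operatorname{grad}f\|^2)^{-1}(\operatorname{grad}f)^i(\operatorname{grad}f)^j$) is what makes it tractable. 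A minor point to address is the set where $\operatorname{grad}f=0$: there the adapted frame degenerates and $\tau(\tilde{I}_d)$ vanishes automatically, but since the bracket above is continuous and vanishes wherever $\operatorname{grad}f\neq0$ under the harmonicity hypothesis, a continuity argument shows the scalar equation holds on all of $M$, securing the ``if and only if''.
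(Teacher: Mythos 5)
Your proof is correct and follows essentially the same route as the paper: both compute $\tau(\tilde{I}_d)=\sum_{i}\left[\nabla_{\tilde{e}_i}\tilde{e}_i-\widetilde{\nabla}_{\tilde{e}_i}\tilde{e}_i\right]$ in the adapted frame with $\operatorname{grad}f/\|\operatorname{grad}f\|$ as a distinguished $g$-unit direction rescaled by $1/\sqrt{1-\|\operatorname{grad}f\|^{2}}$, apply the connection formula (\ref{eq2.1}), and arrive at exactly the tension-field expression (\ref{eq2.4-2}). Your closing remark about the set where $\operatorname{grad}f=0$ is actually more careful than the paper (which silently assumes $\operatorname{grad}f\neq0$ to define its frame); to make that step airtight, note that on the interior of the critical set $f$ is locally constant, so $\Delta f=0$ there, which combined with your continuity argument on the closure of $\{\operatorname{grad}f\neq0\}$ gives the equivalence on all of $M$.
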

\begin{proof}
Let $\{E_{i}\}^{m}_{i=1}$ be a local orthonormal frame on $(M,g)$ with $E_{1}=\frac{1}{\|\operatorname{grad}f\|}\operatorname{grad}f$.
Thus, $\{\tilde{E}_{i}\}^{m}_{i=1}$ is a local orthonormal frame on $(M,\tilde{g})$ such that $\tilde{E}_{1}=\frac{1}{\sqrt{1-\|\operatorname{grad}f\|^{2}}}E_{1}$ and $\tilde{E}_{i}=E_{i}$ for $i=2,...,m$.
We compute the tension field of $\tilde{I}_d$
\begin{eqnarray}\label{eq2.3}
\tau(\tilde{I}_d)
&=&\nonumber\operatorname{trace}_{\tilde{g}}\nabla d\tilde{I}_d\\
&=&\nonumber\sum^{m}_{i=1}\left[\nabla_{\tilde{E}_{i}}^{\tilde{I}_d}d\tilde{I}_d(\tilde{E}_{i})
-d\tilde{I}_d(\widetilde{\nabla}_{\tilde{E}_{i}}\tilde{E}_{i})\right]\\
&=&\nonumber\nabla_{\tilde{E}_{1}}\tilde{E}_{1}-\widetilde{\nabla}_{\tilde{E}_{1}}\tilde{E}_{1}+
\sum^{m}_{i=2}\left[\nabla_{\tilde{E}_{i}}\tilde{E}_{i}-\widetilde{\nabla}_{\tilde{E}_{i}}\tilde{E}_{i}\right]\\
&=&\nonumber\frac{\operatorname{Hess}_{f}(\tilde{E}_{1},\tilde{E}_{1})}{1-\|\operatorname{grad}f\|^{2}}\operatorname{grad}f+
\sum^{m}_{i=2}\frac{\operatorname{Hess}_{f}(\tilde{E}_{i},\tilde{E}_{i})}{1-\|\operatorname{grad}f\|^{2}}\operatorname{grad}f\\
&=&\frac{\operatorname{Hess}_{f}(\operatorname{grad}f,\operatorname{grad}f)}{(1-\|\operatorname{grad}f\|^{2})^{2}\|\operatorname{grad}f\|^{2}}\operatorname{grad}f
+\sum^{m}_{i=2}\frac{\operatorname{Hess}_{f}(E_{i},E_{i})}{1-\|\operatorname{grad}f\|^{2}}\operatorname{grad}f.\qquad
\end{eqnarray}
Note that
\begin{eqnarray}\label{eq2.4}
  \sum^{m}_{i=2}\frac{\operatorname{Hess}_{f}(E_{i},E_{i})}{1-\|\operatorname{grad}f\|^{2}}\operatorname{grad}f.
   &=&\nonumber \sum^{m}_{i=1}\frac{\operatorname{Hess}_{f}(E_{i},E_{i})}{1-\|\operatorname{grad}f\|^{2}}\operatorname{grad}f
       -\frac{\operatorname{Hess}_{f}(E_{1},E_{1})}{1-\|\operatorname{grad}f\|^{2}}\operatorname{grad}f. \\
   &=&\nonumber \frac{\Delta f}{1-\|\operatorname{grad}f\|^{2}}\operatorname{grad}f\\
   & & -\frac{\operatorname{Hess}_{f}(\operatorname{grad}f,\operatorname{grad}f)}
       {(1-\|\operatorname{grad}f\|^{2})\|\operatorname{grad}f\|^{2}}\operatorname{grad}f.
\end{eqnarray}
By substituting (\ref{eq2.4}) in (\ref{eq2.3}), we get
\begin{eqnarray}\label{eq2.4-2}
\tau(\tilde{I}_d)
&=&\left[\frac{\operatorname{Hess}_{f}(\operatorname{grad}f,\operatorname{grad}f)}{(1-\|\operatorname{grad}f\|^{2})^{2}}+
\frac{\Delta f}{1-\|\operatorname{grad}f\|^{2}}\right]\operatorname{grad}f.
\end{eqnarray}
\end{proof}
From Theorem \ref{th2}, we obtain the following Corollaries.
\begin{corollary}
Let $\epsilon\in(0,1)$. If $\tilde{I}_d:(M,g-df\otimes df) \longrightarrow (M,g)$
is harmonic, then the identity map $(M,g-\epsilon^2df\otimes df) \longrightarrow (M,g)$ is harmonic if and only if the function $f$ is harmonic on $(M,g)$.
\end{corollary}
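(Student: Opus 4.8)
The plan is to recognise the map $(M,g-\epsilon^2 df\otimes df)\longrightarrow(M,g)$ as an instance of $\tilde I_d$ for a rescaled potential, and then to compare the two harmonicity criteria delivered by Theorem \ref{th2}. First I would set $h:=\epsilon f$ and observe that $dh\otimes dh=\epsilon^2\,df\otimes df$, so that the identity map $(M,g-\epsilon^2 df\otimes df)\longrightarrow(M,g)$ is \emph{exactly} $\tilde I_d$ with $f$ replaced by $h$. This reduces the entire statement to applying Theorem \ref{th2} twice: once to $f$ (the standing hypothesis, the case $\epsilon=1$) and once to $h$.

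Next I would record the elementary scaling relations $\operatorname{grad}h=\epsilon\operatorname{grad}f$, $\|\operatorname{grad}h\|^2=\epsilon^2\|\operatorname{grad}f\|^2$, $\operatorname{Hess}_h=\epsilon\operatorname{Hess}_f$ and $\Delta h=\epsilon\Delta f$, all immediate from the linearity of the gradient, Hessian and Laplacian in the potential. Feeding these into the criterion of Theorem \ref{th2} applied to $h$, and cancelling the overall factor $\epsilon\neq 0$, the harmonicity of $(M,g-\epsilon^2 df\otimes df)\longrightarrow(M,g)$ becomes equivalent to
$$\epsilon^2\operatorname{Hess}_f(\operatorname{grad}f,\operatorname{grad}f)+\bigl(1-\epsilon^2\|\operatorname{grad}f\|^2\bigr)\Delta f=0.$$
This single scalar equation is what I will confront with the hypothesis.

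By assumption $\tilde I_d$ is harmonic, so Theorem \ref{th2} gives $\operatorname{Hess}_f(\operatorname{grad}f,\operatorname{grad}f)=\bigl(\|\operatorname{grad}f\|^2-1\bigr)\Delta f$. Substituting this into the displayed equation, the terms carrying $\|\operatorname{grad}f\|^2$ cancel and the whole condition collapses to $(1-\epsilon^2)\Delta f=0$. Since $\epsilon\in(0,1)$ forces $1-\epsilon^2\neq 0$, this holds precisely when $\Delta f=0$, i.e.\ when $f$ is harmonic on $(M,g)$; conversely, if $\Delta f=0$ then the hypothesis also forces $\operatorname{Hess}_f(\operatorname{grad}f,\operatorname{grad}f)=0$, so the displayed equation is automatically satisfied. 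This delivers both implications of the asserted equivalence.

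The computation is purely algebraic and I do not anticipate a genuine obstacle; the only point needing care is the factor $1-\epsilon^2$, whose non-vanishing is exactly what the restriction $\epsilon\in(0,1)$ secures, and which is the hinge that turns the substituted identity into the clean condition $\Delta f=0$. I would also stay alert to points where $\operatorname{grad}f$ vanishes, but since I argue throughout with the scalar criterion of Theorem \ref{th2} rather than with the vector tension field directly, such points cause no trouble.
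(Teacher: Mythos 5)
Your proposal is correct and takes essentially the same route the paper intends: the corollary is stated as an immediate consequence of Theorem \ref{th2}, and your substitution of $h=\epsilon f$ into that scalar criterion, followed by eliminating $\operatorname{Hess}_f(\operatorname{grad}f,\operatorname{grad}f)$ via the hypothesis, is exactly the computation that collapses the condition to $(1-\epsilon^2)\Delta f=0$. Both directions of the equivalence are handled correctly, and the role of $\epsilon\in(0,1)$ in ensuring $1-\epsilon^2\neq 0$ is identified precisely.
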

\begin{corollary}
Let $(M,g)$ be a Riemannian manifold and $f\in C^{\infty}(M)$ an affine function, i.e., $\operatorname{Hess}_{f}=0$. Then, the identity map $\tilde{I}_d$ is harmonic.
\end{corollary}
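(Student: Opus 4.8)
The plan is to invoke Theorem \ref{th2} directly and verify that its harmonicity criterion is automatically satisfied under the hypothesis $\operatorname{Hess}_{f}=0$. Recall that Theorem \ref{th2} characterizes harmonicity of $\tilde{I}_d$ by the single scalar equation
$$\operatorname{Hess}_{f}(\operatorname{grad}f,\operatorname{grad}f)+(1-\|\operatorname{grad}f\|^{2})\Delta f=0,$$
so it suffices to show that each summand on the left-hand side vanishes when $\operatorname{Hess}_{f}=0$.

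First I would observe that the first term is immediately zero: since $\operatorname{Hess}_{f}$ vanishes as a bilinear form on $\Gamma(TM)\times\Gamma(TM)$, in particular $\operatorname{Hess}_{f}(\operatorname{grad}f,\operatorname{grad}f)=0$. Next, recalling that the Laplacian is the metric trace of the Hessian, $\Delta f=\operatorname{trace}_{g}\operatorname{Hess}_{f}$, the vanishing of $\operatorname{Hess}_{f}$ forces $\Delta f=0$, so the second term also vanishes irrespective of the factor $1-\|\operatorname{grad}f\|^{2}$. Hence the left-hand side of the criterion is identically zero, and Theorem \ref{th2} yields that $\tilde{I}_d$ is harmonic. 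Equivalently, one may substitute $\operatorname{Hess}_{f}=0$ directly into the explicit tension-field formula (\ref{eq2.4-2}), where both bracketed terms vanish and therefore $\tau(\tilde{I}_d)=0$.

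I do not anticipate any genuine obstacle here, as the statement is a direct specialization of Theorem \ref{th2}. The only point worth a brief remark is the consistency of the standing hypothesis $\|\operatorname{grad}f\|<1$: for an affine $f$ one has $X\|\operatorname{grad}f\|^{2}=2\operatorname{Hess}_{f}(X,\operatorname{grad}f)=0$ for every $X\in\Gamma(TM)$, so $\|\operatorname{grad}f\|$ is constant, and the requirement $\|\operatorname{grad}f\|<1$ simply restricts that constant. This guarantees that $\tilde{g}=g-df\otimes df$ is a genuine Riemannian metric, so that $\tilde{I}_d$ is well defined in the first place.
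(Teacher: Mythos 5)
Your proof is correct and follows exactly the paper's (implicit) argument: the corollary is stated as a direct consequence of Theorem \ref{th2}, whose criterion $\operatorname{Hess}_{f}(\operatorname{grad}f,\operatorname{grad}f)+(1-\|\operatorname{grad}f\|^{2})\Delta f=0$ holds trivially when $\operatorname{Hess}_{f}=0$, since $\Delta f=\operatorname{trace}_{g}\operatorname{Hess}_{f}=0$ as well. Your additional observation that $\|\operatorname{grad}f\|$ is automatically constant for affine $f$ is a nice consistency check on the standing hypothesis, though not needed for the result.
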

\begin{corollary}
We assume that $f$ is convex function on $(M,g)$, i.e., $\operatorname{Hess}_{f}\geq0$.
If $\tilde{I}_d$ is harmonic map, then $\tilde{I}_c$ is harmonic map.
\end{corollary}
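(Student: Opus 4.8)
The plan is to collapse the whole statement onto the two harmonicity criteria already at hand. By Theorem \ref{th2}, the hypothesis that $\tilde{I}_d$ is harmonic is exactly the scalar identity
\begin{equation*}
\operatorname{Hess}_{f}(\operatorname{grad}f,\operatorname{grad}f)+(1-\|\operatorname{grad}f\|^{2})\Delta f=0,
\end{equation*}
while by the Corollary following Theorem \ref{th1} the desired conclusion that $\tilde{I}_c$ is harmonic is equivalent to $\Delta f=0$. Thus the entire task reduces to deducing $\Delta f=0$ from the displayed identity under the convexity assumption.

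First I would record the three sign facts supplied by convexity and the gradient bound. Since $\operatorname{Hess}_{f}\geq 0$, evaluating this bilinear form on the vector $\operatorname{grad}f$ gives $\operatorname{Hess}_{f}(\operatorname{grad}f,\operatorname{grad}f)\geq 0$. Computing the trace of the positive semi-definite form $\operatorname{Hess}_{f}$ in a $g$-orthonormal frame $\{e_i\}$ gives $\Delta f=\operatorname{trace}_g\operatorname{Hess}_{f}=\sum_i\operatorname{Hess}_{f}(e_i,e_i)\geq 0$. Finally, the standing assumption $\|\operatorname{grad}f\|<1$ yields the strict positivity $1-\|\operatorname{grad}f\|^{2}>0$.

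The core of the argument is then an elementary non-negativity observation. In the identity coming from Theorem \ref{th2}, the two summands $\operatorname{Hess}_{f}(\operatorname{grad}f,\operatorname{grad}f)$ and $(1-\|\operatorname{grad}f\|^{2})\Delta f$ are each non-negative (the second because it is the product of the positive factor $1-\|\operatorname{grad}f\|^{2}$ with the non-negative quantity $\Delta f$), yet their sum is zero. Hence each summand must vanish; in particular $(1-\|\operatorname{grad}f\|^{2})\Delta f=0$, and dividing by the strictly positive factor forces $\Delta f=0$. Applying the Corollary to Theorem \ref{th1} then gives that $\tilde{I}_c$ is harmonic, which completes the proof.

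I do not expect a genuine obstacle here: once the two criteria are placed side by side the argument is a direct sign-chase. The only point demanding a moment of care is justifying that $\operatorname{trace}_g\operatorname{Hess}_{f}\geq 0$, which is immediate from computing the trace in an orthonormal frame together with $\operatorname{Hess}_{f}(e_i,e_i)\geq 0$ for each $i$; correspondingly one should confirm the sign convention under which $\Delta f=\operatorname{trace}_g\operatorname{Hess}_{f}$, and this is precisely the convention fixed by the computation carried out in the proof of Theorem \ref{th1}.
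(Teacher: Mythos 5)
Your proof is correct and follows exactly the route the paper intends: the corollary is stated as an immediate consequence of Theorem \ref{th2} (the paper gives no separate proof), and the evident argument is precisely your sign-chase — both terms in the criterion of Theorem \ref{th2} are non-negative under convexity, so each vanishes, forcing $\Delta f=0$ and hence harmonicity of $\tilde{I}_c$ by the first corollary of Theorem \ref{th1}. Your check of the sign convention $\Delta f=\operatorname{trace}_g\operatorname{Hess}_f$, fixed by the computation in the proof of Theorem \ref{th1}, is the right point to verify and is consistent with the paper.
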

\begin{corollary}\label{co3}
Let $(M,g)$ be a Riemannian manifold. We assume that $\operatorname{Hess}_{f}=\lambda g$ for some $\lambda \in C^\infty(M)$. Then, $\tilde{I}_d$ is harmonic if and only if $\lambda=0$.
\end{corollary}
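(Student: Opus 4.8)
The plan is to feed the structural hypothesis $\operatorname{Hess}_f=\lambda g$ directly into the harmonicity criterion of Theorem \ref{th2} and reduce it to a pointwise algebraic identity in $\lambda$. First I would evaluate the two ingredients appearing in that criterion. Contracting $\operatorname{Hess}_f=\lambda g$ against $\operatorname{grad}f$ in both slots gives $\operatorname{Hess}_f(\operatorname{grad}f,\operatorname{grad}f)=\lambda\, g(\operatorname{grad}f,\operatorname{grad}f)=\lambda\|\operatorname{grad}f\|^2$, while taking the $g$-trace gives $\Delta f=\operatorname{trace}_g\operatorname{Hess}_f=\lambda\,\operatorname{trace}_g g=m\lambda$, where $m=\dim M$.

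Substituting these into the condition of Theorem \ref{th2} turns it into
$$\lambda\|\operatorname{grad}f\|^2+(1-\|\operatorname{grad}f\|^2)\,m\lambda=0,$$
which factors as
$$\lambda\big[m-(m-1)\|\operatorname{grad}f\|^2\big]=0.$$
Thus $\tilde{I}_d$ is harmonic precisely when this product vanishes at every point of $M$.

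The crux — though it is short — is to observe that the bracketed factor never vanishes. Indeed, the standing assumption $\|\operatorname{grad}f\|<1$ forces $(m-1)\|\operatorname{grad}f\|^2<m-1<m$, hence $m-(m-1)\|\operatorname{grad}f\|^2>1>0$ everywhere on $M$. Therefore the product is zero if and only if $\lambda=0$, which yields the claimed equivalence; the reverse direction is in any case immediate, since $\lambda=0$ means $\operatorname{Hess}_f=0$ and then both terms in the criterion of Theorem \ref{th2} drop out. I expect no genuine obstacle here: the only point demanding care is confirming strict positivity of the coefficient $m-(m-1)\|\operatorname{grad}f\|^2$ from the hypothesis $\|\operatorname{grad}f\|<1$, which is exactly what excludes a spurious harmonic solution with $\lambda\neq0$.
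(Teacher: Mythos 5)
Your proof is correct and follows essentially the same route as the paper: substitute $\operatorname{Hess}_f=\lambda g$ into the criterion of Theorem \ref{th2}, factor out $\lambda$, and observe that the remaining coefficient $m-(m-1)\|\operatorname{grad}f\|^{2}$ cannot vanish because $\|\operatorname{grad}f\|<1$. The only cosmetic difference is that the paper rules out a vanishing coefficient by noting it would force $\|\operatornamename{grad}f\|^{2}=m/(m-1)>1$, whereas you argue positivity directly; both amount to the same observation.
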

\begin{proof}
According to Theorem \ref{th2}, the identity map $\tilde{I}_d$ is harmonic if and only if
\begin{eqnarray*}
\lambda\left(\|\operatorname{grad}f\|^{2}+m(1-\|\operatorname{grad}f\|^{2})\right)=0,
\end{eqnarray*}
it is equivalent to the following
\begin{eqnarray}\label{eq2.5}
\lambda\left(m+(1-m)\|\operatorname{grad}f\|^{2}\right)=0.
\end{eqnarray}
Note that, if $\|\operatorname{grad}f\|^{2}=m/(m-1)$, we get $\|\operatorname{grad}f\|^{2}=1+1/(m-1)>1$.
The Corollary \ref{co3} follows from equation (\ref{eq2.5}).
\end{proof}
\begin{theorem}\label{th3}
Let $(M,g)$ be a compact Riemannian manifold. Then, the identity map $\tilde{I}_d$ is harmonic if and only if the function $f$ is constant on $M$.
\end{theorem}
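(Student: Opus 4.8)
The plan is to prove both implications, the forward one being immediate and the converse requiring an integration argument. If $f$ is constant then $\operatorname{grad}f=0$, so $\tilde{g}=g$, and the criterion of Theorem \ref{th2} holds trivially; hence $\tilde{I}_d$ is harmonic. The substance lies in the converse: assuming $\tilde{I}_d$ is harmonic on the compact $M$, I want to deduce $\operatorname{grad}f\equiv 0$.

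My strategy is to recast the scalar harmonicity condition from Theorem \ref{th2}, namely
\[
\operatorname{Hess}_{f}(\operatorname{grad}f,\operatorname{grad}f)+(1-\|\operatorname{grad}f\|^{2})\Delta f=0,
\]
as the vanishing of the divergence of a suitably weighted gradient field, so that the divergence (Green's) theorem on the closed manifold $M$ becomes available. Writing $u=\|\operatorname{grad}f\|^{2}$ and using the identity $g(\operatorname{grad}u,\operatorname{grad}f)=2\operatorname{Hess}_{f}(\operatorname{grad}f,\operatorname{grad}f)$, I look for a function $h(u)$ such that $W:=h(u)\operatorname{grad}f$ satisfies $\operatorname{div}W=(\text{positive factor})\times[\operatorname{Hess}_{f}(\operatorname{grad}f,\operatorname{grad}f)+(1-u)\Delta f]$. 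Expanding $\operatorname{div}W=h'(u)\,g(\operatorname{grad}u,\operatorname{grad}f)+h(u)\Delta f$ and matching coefficients leads to the separable equation $h'/h=\tfrac12(1-u)^{-1}$, which singles out $h(u)=(1-u)^{-1/2}$. With this choice one gets $\operatorname{div}W=(1-u)^{-3/2}\big[\operatorname{Hess}_{f}(\operatorname{grad}f,\operatorname{grad}f)+(1-u)\Delta f\big]$, which vanishes identically by harmonicity.

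With $\operatorname{div}W=0$ in hand, I apply the divergence theorem to the field $fW$: since $M$ is compact without boundary, $\int_{M}\operatorname{div}(fW)\,v^{g}=0$, and expanding $\operatorname{div}(fW)=g(\operatorname{grad}f,W)+f\operatorname{div}W=g(\operatorname{grad}f,W)$ gives
\[
\int_{M}\frac{\|\operatorname{grad}f\|^{2}}{\sqrt{1-\|\operatorname{grad}f\|^{2}}}\,v^{g}=0.
\]
Because $\|\operatorname{grad}f\|<1$, the integrand is continuous and nonnegative, so it vanishes identically; hence $\operatorname{grad}f\equiv 0$ and $f$ is constant on (connected) $M$.

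The main obstacle is the second step: guessing the integrating factor $h(u)=(1-u)^{-1/2}$ that turns the nonlinear harmonicity equation into an exact divergence. Once the correct weight is identified the rest is routine, but two points deserve care: (i) confirming that $W$ is globally smooth, which holds because $\|\operatorname{grad}f\|<1$ keeps $1-u$ bounded away from $0$; and (ii) ensuring that the pointwise identity of Theorem \ref{th2} is used on all of $M$, including the critical set of $f$, where it persists by continuity of $\operatorname{div}W$.
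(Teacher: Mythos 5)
Your proof is correct, and it takes a genuinely different route from the paper's. The paper proceeds by computing the Laplacian $\widetilde{\Delta}f$ of $f$ with respect to the \emph{deformed} metric $\tilde{g}$ in an adapted orthonormal frame, observing that the tension field satisfies $\tau(\tilde{I}_d)=(\widetilde{\Delta}f)\operatorname{grad}f$, and then invoking Green's theorem on the compact manifold $(M,\tilde{g})$ to conclude that a $\tilde{g}$-harmonic function is constant. You instead stay entirely on $(M,g)$: you find the integrating factor $h(u)=(1-u)^{-1/2}$, $u=\|\operatorname{grad}f\|^{2}$, which converts the scalar criterion of Theorem \ref{th2} into the exact divergence condition $\operatorname{div}\bigl((1-u)^{-1/2}\operatorname{grad}f\bigr)=0$, and then integrate $\operatorname{div}(fW)$ over $(M,g)$. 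The two are in fact equivalent in content --- one checks from the paper's formula (2.8) that $\widetilde{\Delta}f=(1-u)^{-1/2}\operatorname{div}_g W$ with your $W$ --- but the arguments are organized differently and each has its advantages. Your version avoids computing the deformed Laplacian altogether and sidesteps a subtlety in the paper's argument: from $\tau(\tilde{I}_d)=(\widetilde{\Delta}f)\operatorname{grad}f=0$ one can only conclude $\widetilde{\Delta}f=0$ off the critical set of $f$ (a continuity argument is needed to extend it across), whereas you multiply the everywhere-valid scalar identity of Theorem \ref{th2} by the positive factor $(1-u)^{-3/2}$, so no division by $\operatorname{grad}f$ ever occurs; your final positivity argument with the test field $fW$ is likewise self-contained. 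What the paper's route buys in exchange is the structural identity $\tau(\tilde{I}_d)=(\widetilde{\Delta}f)\operatorname{grad}f$, i.e.\ that harmonicity of $\tilde{I}_d$ is equivalent to $\tilde{g}$-harmonicity of $f$ (the Remark following the theorem), which is of independent interest beyond the compact setting. One cosmetic caveat: your last step needs $M$ connected to pass from $\operatorname{grad}f\equiv 0$ to ``$f$ is constant,'' which you noted and which is the standing convention of the paper.
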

\begin{proof}
Let $\{E_{i}\}^{m}_{i=1}$ be a local orthonormal frame on $(M,g)$ with $E_{1}=\frac{1}{\|\operatorname{grad}f\|}\operatorname{grad}f$.
So that $\{\tilde{E}_{i}\}^{m}_{i=1}$ is a local orthonormal frame on $(M,\tilde{g})$ with $\tilde{E}_{1}=\frac{1}{\sqrt{1-\|\operatorname{grad}f\|^{2}}}E_{1}$ and $\tilde{E}_{i}=E_{i}$ for $i=2,...,m$.
We compute the Laplacian of $f$ on $(M,\tilde{g})$
\begin{eqnarray}\label{eq2.6}
  \widetilde{\Delta}f
   &=&\nonumber \sum_{i=1}^m \left[\tilde{E}_i(\tilde{E}_i(f))-(\widetilde{\nabla}_{\tilde{E}_i}\tilde{E}_i)(f)\right]\\
   &=&\nonumber  \tilde{E}_1(\tilde{E}_1(f))-(\widetilde{\nabla}_{\tilde{E}_1}\tilde{E}_1)(f)+\sum_{i=2}^m \left[\tilde{E}_i(\tilde{E}_i(f))-(\widetilde{\nabla}_{\tilde{E}_i}\tilde{E}_i)(f)\right]\\
   &=&\nonumber\frac{1}{1-\|\operatorname{grad}f\|^2}\left[E_1(E_1(f))-(\widetilde{\nabla}_{E_1}E_1)(f)\right]\\
   & &+\sum_{i=2}^m \left[E_i(E_i(f))-(\widetilde{\nabla}_{E_i}E_i)(f)\right].
\end{eqnarray}
By using (\ref{eq2.1}) and (\ref{eq2.6}), we find that
\begin{eqnarray}\label{eq2.7}
  \widetilde{\Delta}f
   &=&\nonumber\frac{1}{1-\|\operatorname{grad}f\|^2}\left[E_1(E_1(f))-(\nabla_{E_1}E_1)(f)
      +\frac{\operatorname{Hess}_f(E_1,E_1)\|\operatorname{grad}f\|^2}{1-\|\operatorname{grad}f\|^2}\right]\\
   & &\nonumber+\sum_{i=2}^m \left[E_i(E_i(f))
   -(\nabla_{E_i}E_i)(f)+\frac{\operatorname{Hess}_f(E_i,E_i)\|\operatorname{grad}f\|^2}{1-\|\operatorname{grad}f\|^2}\right]\\
   &=&\nonumber  \frac{\|\operatorname{grad}f\|^2}{1-\|\operatorname{grad}f\|^2}\left[E_1(E_1(f))-(\nabla_{E_1}E_1)(f)\right]
        +\frac{\operatorname{Hess}_f(E_1,E_1)\|\operatorname{grad}f\|^4}{(1-\|\operatorname{grad}f\|^2)^2}\\
   & &  +\frac{\Delta f}{1-\|\operatorname{grad}f\|^2}.
\end{eqnarray}
From (\ref{eq2.7}) and the definition of the Laplacian on $(M,g)$, we get
\begin{eqnarray}\label{eq2.8}
  \widetilde{\Delta}f
   &=&\nonumber  \frac{\operatorname{Hess}_f(\operatorname{grad}f,\operatorname{grad}f)}{1-\|\operatorname{grad}f\|^2}
        +\frac{\operatorname{Hess}_f(\operatorname{grad}f,\operatorname{grad}f)\|\operatorname{grad}f\|^2}{(1-\|\operatorname{grad}f\|^2)^2}
        +\frac{\Delta f}{1-\|\operatorname{grad}f\|^2}\\
   &=&  \frac{\operatorname{Hess}_f(\operatorname{grad}f,\operatorname{grad}f)}{(1-\|\operatorname{grad}f\|^2)^2}
        +\frac{\Delta f}{1-\|\operatorname{grad}f\|^2}.
\end{eqnarray}
From (\ref{eq2.4-2}) and (\ref{eq2.8}), we conclude that
$\tau(\tilde{I}_d)=(\widetilde{\Delta}f)\operatorname{grad}f$. The proof follows from Green's theorem.
\end{proof}
\begin{remark}
The identity map $\tilde{I}_d$ is harmonic if and only if the function $f$ is harmonic with respect to Riemannian metric $\tilde{g}$.
\end{remark}
\begin{example}
Let $f(x_1,...,x_n)=a_1x_1+...+a_nx_n$ for all $(x_1,...,x_n)\in\mathbb{R}^n$, where $a_1,...,a_n\in \mathbb{R}$ such that
$a_1^2+...+a_n^2<1$. Then, the identity map $\tilde{I}_d$ is harmonic from $(\mathbb{R}^n,\tilde{g})$ into $(\mathbb{R}^n,g)$ where $g=dx_1^2+...+dx_n^2$ and here $$\tilde{g}=\sum_{i=1}^n(1-a_i^2)dx_i^2-\sum_{i,j=1}^na_ia_jdx_idx_j.$$
\end{example}
\begin{example}
Let $f(x_1,...,x_n)=a+b\,(x_1+...+x_{n-1})$ for all $(x_1,...,x_n)\in M$, where $a,b\in \mathbb{R}$ $(b\neq0)$ and
$M=\{(x_1,...,x_n)\in\mathbb{R}^n\,|\,0<(n-1)b^2x_n^2<1\}\subset\mathbb{H}^n$. Then, the identity map $\tilde{I}_d$ is harmonic from $(M,\tilde{g})$ into $(M,g)$ where $g=x_n^{-2}(dx_1^2+...+dx_n^2)$. Here, $\|\operatorname{grad}f\|^2=(n-1)b^2x_n^2$ and the Riemannian metric $\tilde{g}$ is given by  $$\tilde{g}=x_n^{-2}(1-b^2x_n^2)\sum_{i=1}^{n-1}dx_i^2+x_n^{-2}dx_n^2-b^2\sum_{i,j=1}^{n-1}dx_idx_j.$$
\end{example}
\section{Characterization of harmonic identity maps}
\begin{definition}
Let $(M,g)$ be a Riemannian manifold and $f\in C^\infty(M)$. We define a symmetric tensor field on $M$ by
\begin{eqnarray*}
   \chi=\|\operatorname{grad}f\|^{2}\operatorname{Hess}_{f}+(\Delta f)(g-df\otimes df).
\end{eqnarray*}
\end{definition}
\begin{remark}
We assume that $\|\operatorname{grad}f\|<1$ on $M$. We have
\begin{itemize}
  \item the identity map $\tilde{I}_c:(M,g) \longrightarrow (M,g-df\otimes df)$ is harmonic if and only if
  $$\operatorname{trace}_{g}\chi=0;$$
  \item the identity map $\tilde{I}_d:(M,g-df\otimes df) \longrightarrow (M,g)$ is harmonic if and only if
  $$\chi(\operatorname{grad}f,\operatorname{grad}f)=0;$$
  \item if $M$ is a compact manifold, then $\tilde{I}_c$ and $\tilde{I}_d$ are harmonic if and only if $\chi=0$.
\end{itemize}
\end{remark}
In the following Theorem we compute the divergence of $\chi$ this is in order to find other properties about this symmetric tensor field.
\begin{theorem}\label{th3}
Let $(M,g)$ be a Riemannian manifold and $f\in C^\infty(M)$. The divergence of $\chi$ is given by
\begin{eqnarray*}
  (\operatorname{div}\chi)(V)
   &=& g(\nabla_{V}\operatorname{grad}f,\operatorname{grad}\|\operatorname{grad}f\|^{2})
                +\|\operatorname{grad}f\|^{2}\operatorname{Ric}(\operatorname{grad}f,V)\\
   & & +(1+\|\operatorname{grad}f\|^{2})g(\operatorname{grad}\Delta f,V)-g(\operatorname{grad}\Delta f,\operatorname{grad}f)g(V,\operatorname{grad}f)\\
   & & -(\Delta f)^2g(V,\operatorname{grad}f)
                -\frac{1}{2}(\Delta f)g(V,\operatorname{grad}\|\operatorname{grad}f\|^{2}),
\end{eqnarray*}
for all $V\in\Gamma(TM)$.
\end{theorem}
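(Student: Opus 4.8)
The plan is to compute the divergence straight from the definition
\[
(\operatorname{div}\chi)(V)=\sum_{i=1}^m(\nabla_{e_i}\chi)(e_i,V),
\]
for a local orthonormal frame $\{e_i\}_{i=1}^m$ on $(M,g)$, splitting $\chi$ into its three natural summands and differentiating each by the Leibniz rule. Writing $\phi:=\|\operatorname{grad}f\|^{2}$ for brevity, we have $\chi=\phi\,\operatorname{Hess}_f+(\Delta f)\,g-(\Delta f)\,df\otimes df$, and since $\nabla$ is metric ($\nabla g=0$) every covariant derivative falls only on the scalar coefficients and on $\operatorname{Hess}_f$ and $df\otimes df$. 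First I would treat the term $\phi\,\operatorname{Hess}_f$: Leibniz gives $(\nabla_{e_i}(\phi\operatorname{Hess}_f))(e_i,V)=(e_i\phi)\operatorname{Hess}_f(e_i,V)+\phi(\nabla_{e_i}\operatorname{Hess}_f)(e_i,V)$, and summing the first piece with $\sum_i(e_i\phi)e_i=\operatorname{grad}\phi$, together with the symmetry of the Hessian, yields $\operatorname{Hess}_f(\operatorname{grad}\phi,V)=g(\nabla_V\operatorname{grad}f,\operatorname{grad}\phi)$, which is exactly the first term of the claimed formula.

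The main step, and the only place where curvature appears, is the summed second piece $\phi\sum_i(\nabla_{e_i}\operatorname{Hess}_f)(e_i,V)=\phi\,(\operatorname{div}\operatorname{Hess}_f)(V)$. Here I would invoke the commutation (Ricci) identity for the third covariant derivative of $f$: using the symmetry $(\nabla_X\operatorname{Hess}_f)(Y,Z)=(\nabla_X\operatorname{Hess}_f)(Z,Y)$ and the curvature identity $(\nabla_X\operatorname{Hess}_f)(Y,Z)-(\nabla_Y\operatorname{Hess}_f)(X,Z)=g(R(X,Y)\operatorname{grad}f,Z)$, one rearranges $\sum_i(\nabla_{e_i}\operatorname{Hess}_f)(e_i,V)$ into $\sum_i(\nabla_V\operatorname{Hess}_f)(e_i,e_i)$ plus a curvature sum. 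The former equals $V(\Delta f)=g(\operatorname{grad}\Delta f,V)$ because the metric trace commutes with $\nabla_V$, and the curvature sum contracts to the Ricci tensor (with the standard sign conventions), giving $\operatorname{Ric}(\operatorname{grad}f,V)$. Thus this term produces $\phi\,g(\operatorname{grad}\Delta f,V)+\phi\operatorname{Ric}(\operatorname{grad}f,V)$, which accounts for the Ricci term and for the $\phi$-part of the coefficient $(1+\phi)$. This Bochner-type commutation is the one genuinely non-routine ingredient; everything else is bookkeeping.

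The remaining two summands are direct. For $(\Delta f)\,g$, metricity annihilates the derivative of $g$ and leaves $\sum_i(e_i\Delta f)\,g(e_i,V)=g(\operatorname{grad}\Delta f,V)$, which combines with the earlier $\phi\,g(\operatorname{grad}\Delta f,V)$ to give the full $(1+\phi)\,g(\operatorname{grad}\Delta f,V)$. For $-(\Delta f)\,df\otimes df$, differentiating the coefficient contributes $-g(\operatorname{grad}\Delta f,\operatorname{grad}f)\,g(V,\operatorname{grad}f)$, while expanding $\nabla_{e_i}(df\otimes df)=(\nabla_{e_i}df)\otimes df+df\otimes(\nabla_{e_i}df)$ and contracting over $i$ yields $-(\Delta f)^2g(V,\operatorname{grad}f)-(\Delta f)\operatorname{Hess}_f(\operatorname{grad}f,V)$.

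Finally I would use the elementary identity $\operatorname{Hess}_f(\operatorname{grad}f,V)=\tfrac12\,g(V,\operatorname{grad}\|\operatorname{grad}f\|^{2})$, which is immediate from differentiating $\|\operatorname{grad}f\|^{2}=g(\operatorname{grad}f,\operatorname{grad}f)$ and the symmetry of the Hessian, to rewrite the last contribution as $-\tfrac12(\Delta f)\,g(V,\operatorname{grad}\|\operatorname{grad}f\|^{2})$. Collecting the six surviving terms then gives precisely the stated expression for $(\operatorname{div}\chi)(V)$. I expect the only real obstacle to be getting the curvature commutation and its contraction to Ricci correct with the paper's conventions; the rest is a careful application of the Leibniz rule and metric compatibility.
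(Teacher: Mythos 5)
Your proposal is correct and follows essentially the same route as the paper: a term-by-term Leibniz computation of $\sum_i(\nabla_{e_i}\chi)(e_i,V)$ combined with the contracted commutation identity $\operatorname{div}\operatorname{Hess}_f = d\Delta f + \operatorname{Ric}(\operatorname{grad}f,\cdot)$, which is exactly the paper's identity $\operatorname{trace}_g(\nabla)^2\operatorname{grad}f=\operatorname{Ricci}(\operatorname{grad}f)+\operatorname{grad}\Delta f$ in covector form. The only cosmetic difference is that you work invariantly against an arbitrary $V$ (and sketch a derivation of the commutation identity), whereas the paper evaluates in a geodesic frame at a point and quotes the identity.
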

\begin{proof}
Let $\{e_i\}_{i=1}^m$ be a local orthonormal frame on $(M,g)$. We suppose that $\nabla_{e_i}e_j=0$ at $x\in M$ for all $i,j=1,...,m$. For all $j=1,...,m$, we have at $x$
\begin{eqnarray}\label{eq3.1}
  (\operatorname{div}\chi)(e_j)
   &=&\nonumber \sum_{i=1}^m(\nabla_{e_i}\chi)(e_i,e_j) \\
   &=&\nonumber \sum_{i=1}^m e_i\left[\chi(e_i,e_j)\right]\\
   &=&\nonumber \sum_{i=1}^m e_i\Big[\|\operatorname{grad}f\|^{2}g(\nabla_{e_i}\operatorname{grad}f,e_j)+(\Delta f)g(e_i,e_j)\\
   & &          -(\Delta f)g(e_i,\operatorname{grad}f)g(e_j,\operatorname{grad}f)\Big].
\end{eqnarray}
By direct calculation we obtain from (\ref{eq3.1}) the following
\begin{eqnarray}\label{eq3.2}
  (\operatorname{div}\chi)(e_j)
   &=&\nonumber g(\nabla_{e_j}\operatorname{grad}f,\operatorname{grad}\|\operatorname{grad}f\|^{2})
                +\|\operatorname{grad}f\|^{2}g(\operatorname{trace}_g(\nabla)^2\operatorname{grad}f,e_j)\\
   & &\nonumber +g(\operatorname{grad}\Delta f,e_j)-g(\operatorname{grad}\Delta f,\operatorname{grad}f)g(e_j,\operatorname{grad}f)\\
   & & -(\Delta f)^2g(e_j,\operatorname{grad}f)
                -\frac{1}{2}(\Delta f)g(e_j,\operatorname{grad}\|\operatorname{grad}f\|^{2}).
\end{eqnarray}
The Theorem \ref{th3} follows from equation (\ref{eq3.2}) and the following
$$\operatorname{trace}_g(\nabla)^2\operatorname{grad}f=\operatorname{Ricci}(\operatorname{grad}f)+\operatorname{grad}\Delta f.$$
\end{proof}

\begin{corollary}\label{co3.1}
Let $(M,g)$ be a Riemannian manifold and $f\in C^\infty(M)$. Then
\begin{eqnarray*}
  (\operatorname{div}\chi)(\operatorname{grad}f)
   &=& \frac{1}{2}\|\operatorname{grad}\|\operatorname{grad}f\|^{2}\|^2
                +\|\operatorname{grad}f\|^{2}\operatorname{Ric}(\operatorname{grad}f,\operatorname{grad}f)\\
   & & +g(\operatorname{grad}\Delta f,\operatorname{grad}f)-(\Delta f)^2\|\operatorname{grad}f\|^2\\
   & & -\frac{1}{2}(\Delta f)g(\operatorname{grad}f,\operatorname{grad}\|\operatorname{grad}f\|^{2}).
\end{eqnarray*}
\end{corollary}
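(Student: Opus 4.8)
The plan is to obtain the formula for $(\operatorname{div}\chi)(\operatorname{grad}f)$ directly by substituting $V=\operatorname{grad}f$ into the general expression established in the preceding Theorem \ref{th3}. Since that Theorem already provides $(\operatorname{div}\chi)(V)$ for an arbitrary $V\in\Gamma(TM)$, no further computation of the divergence is required; the entire task reduces to specializing and simplifying the resulting six terms.

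First I would treat the first term $g(\nabla_{V}\operatorname{grad}f,\operatorname{grad}\|\operatorname{grad}f\|^{2})$, which is the only one demanding a genuine simplification. The key observation is the identity
\begin{equation*}
\operatorname{grad}\|\operatorname{grad}f\|^{2}=2\nabla_{\operatorname{grad}f}\operatorname{grad}f,
\end{equation*}
which follows from the symmetry of the Hessian: for any $X\in\Gamma(TM)$ one has $g(\operatorname{grad}\|\operatorname{grad}f\|^{2},X)=X(\|\operatorname{grad}f\|^{2})=2\operatorname{Hess}_{f}(X,\operatorname{grad}f)=2g(\nabla_{\operatorname{grad}f}\operatorname{grad}f,X)$. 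With $V=\operatorname{grad}f$ this converts the first term into $g\big(\tfrac{1}{2}\operatorname{grad}\|\operatorname{grad}f\|^{2},\operatorname{grad}\|\operatorname{grad}f\|^{2}\big)=\tfrac{1}{2}\|\operatorname{grad}\|\operatorname{grad}f\|^{2}\|^{2}$, exactly the leading term of the Corollary.

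The remaining terms follow by routine substitution. The second term becomes $\|\operatorname{grad}f\|^{2}\operatorname{Ric}(\operatorname{grad}f,\operatorname{grad}f)$ at once. For the third and fourth terms I would combine them: putting $V=\operatorname{grad}f$ produces $(1+\|\operatorname{grad}f\|^{2})g(\operatorname{grad}\Delta f,\operatorname{grad}f)-g(\operatorname{grad}\Delta f,\operatorname{grad}f)\|\operatorname{grad}f\|^{2}$, whereupon the two $\|\operatorname{grad}f\|^{2}$ contributions cancel and leave $g(\operatorname{grad}\Delta f,\operatorname{grad}f)$. Finally, replacing $g(V,\operatorname{grad}f)$ by $\|\operatorname{grad}f\|^{2}$ in the fifth term yields $-(\Delta f)^{2}\|\operatorname{grad}f\|^{2}$, while the sixth term is already in the desired shape.

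Since every step is either a direct substitution or an elementary cancellation, I do not expect a serious obstacle. The single point requiring care is establishing the identity $\operatorname{grad}\|\operatorname{grad}f\|^{2}=2\nabla_{\operatorname{grad}f}\operatorname{grad}f$ correctly, as this is the only nontrivial manipulation and is what turns the first term into a squared norm; the rest of the proof is bookkeeping of the specialized terms.
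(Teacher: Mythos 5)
Your proposal is correct and matches the paper's (implicit) argument: the Corollary is stated as an immediate consequence of Theorem \ref{th3}, obtained precisely by substituting $V=\operatorname{grad}f$ and simplifying. Your key identity $\operatorname{grad}\|\operatorname{grad}f\|^{2}=2\nabla_{\operatorname{grad}f}\operatorname{grad}f$, justified via the symmetry of $\operatorname{Hess}_f$, is exactly what is needed to produce the term $\tfrac{1}{2}\|\operatorname{grad}\|\operatorname{grad}f\|^{2}\|^{2}$, and the remaining cancellations are as you describe.
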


\begin{proposition}\label{pro3.1}
Let $(M,g)$ be a Riemannian manifold and $f\in C^\infty(M)$. We assume that
$\operatorname{Ric}\geq0$ and $\|\operatorname{grad}f\|$ is constant $<1$. Then
$$(\operatorname{div}\chi)(\operatorname{grad}f)\leq0.$$
\end{proposition}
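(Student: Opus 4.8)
The plan is to start from the explicit formula for $(\operatorname{div}\chi)(\operatorname{grad}f)$ given in Corollary \ref{co3.1} and show that, under the two hypotheses $\operatorname{Ric}\geq 0$ and $\|\operatorname{grad}f\|$ constant $<1$, every term on the right-hand side is non-positive (or vanishes). The key observation is that the hypothesis ``$\|\operatorname{grad}f\|$ is constant'' forces $\operatorname{grad}\|\operatorname{grad}f\|^{2}=0$, which will annihilate three of the five terms in the Corollary's expression at a single stroke.

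Concretely, I would proceed as follows. First, since $\|\operatorname{grad}f\|^{2}$ is constant, its gradient vanishes identically, so
\begin{eqnarray*}
\tfrac{1}{2}\|\operatorname{grad}\|\operatorname{grad}f\|^{2}\|^{2}=0,
\qquad
-\tfrac{1}{2}(\Delta f)\,g(\operatorname{grad}f,\operatorname{grad}\|\operatorname{grad}f\|^{2})=0.
\end{eqnarray*}
Thus the formula in Corollary \ref{co3.1} collapses to
\begin{eqnarray*}
(\operatorname{div}\chi)(\operatorname{grad}f)
=\|\operatorname{grad}f\|^{2}\operatorname{Ric}(\operatorname{grad}f,\operatorname{grad}f)
+g(\operatorname{grad}\Delta f,\operatorname{grad}f)
-(\Delta f)^{2}\|\operatorname{grad}f\|^{2}.
\end{eqnarray*}
The middle term $g(\operatorname{grad}\Delta f,\operatorname{grad}f)$ is the remaining obstacle, since \emph{a priori} it has no sign. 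The route around it is again the constancy of $\|\operatorname{grad}f\|^{2}$: differentiating $\|\operatorname{grad}f\|^{2}=\operatorname{const}$ in the direction $\operatorname{grad}f$ and using the standard Bochner-type identity $\tfrac{1}{2}\Delta\|\operatorname{grad}f\|^{2}=\|\operatorname{Hess}_{f}\|^{2}+\operatorname{Ric}(\operatorname{grad}f,\operatorname{grad}f)+g(\operatorname{grad}\Delta f,\operatorname{grad}f)$ (Bochner's formula), the left-hand side vanishes, yielding
\begin{eqnarray*}
g(\operatorname{grad}\Delta f,\operatorname{grad}f)
=-\|\operatorname{Hess}_{f}\|^{2}-\operatorname{Ric}(\operatorname{grad}f,\operatorname{grad}f).
\end{eqnarray*}

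Substituting this back, the Ricci contribution telescopes and one is left with a manifestly controllable expression. Explicitly,
\begin{eqnarray*}
(\operatorname{div}\chi)(\operatorname{grad}f)
=(\|\operatorname{grad}f\|^{2}-1)\operatorname{Ric}(\operatorname{grad}f,\operatorname{grad}f)
-\|\operatorname{Hess}_{f}\|^{2}
-(\Delta f)^{2}\|\operatorname{grad}f\|^{2}.
\end{eqnarray*}
Now I invoke the hypotheses termwise: the factor $(\|\operatorname{grad}f\|^{2}-1)\leq 0$ since $\|\operatorname{grad}f\|<1$, and $\operatorname{Ric}(\operatorname{grad}f,\operatorname{grad}f)\geq 0$ by $\operatorname{Ric}\geq 0$, so the first term is $\leq 0$; the remaining two terms $-\|\operatorname{Hess}_{f}\|^{2}$ and $-(\Delta f)^{2}\|\operatorname{grad}f\|^{2}$ are non-positive as squares. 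Hence $(\operatorname{div}\chi)(\operatorname{grad}f)\leq 0$, completing the proof. The main point to handle carefully is the Bochner step, since the sign on $g(\operatorname{grad}\Delta f,\operatorname{grad}f)$ is exactly what the constancy hypothesis is there to control; everything after that is termwise inspection of signs.
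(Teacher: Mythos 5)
Your proposal is correct and follows essentially the same route as the paper: both start from Corollary \ref{co3.1}, use the Bochner--Weitzenb\"{o}ck formula to eliminate the signless term $g(\operatorname{grad}\Delta f,\operatorname{grad}f)$, and then use the constancy of $\|\operatorname{grad}f\|$ to kill the remaining derivative terms, arriving at the identical final expression $-(1-\|\operatorname{grad}f\|^{2})\operatorname{Ric}(\operatorname{grad}f,\operatorname{grad}f)-\|\operatorname{Hess}_{f}\|^{2}-(\Delta f)^{2}\|\operatorname{grad}f\|^{2}$. The only difference is the order of the two substitutions (you apply constancy before Bochner, the paper after), which is immaterial.
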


\begin{proof}
By using the Bochner-Weitzenb\"{o}ck formula for smooth functions (see \cite{Sakai})
\begin{equation}\label{eq3.3}
\frac{1}{2}\Delta\|\operatorname{grad}f\|^{2}=\|\operatorname{Hess}_f\|^{2}+g(\operatorname{grad}f,\operatorname{grad}\Delta f)
+\operatorname{Ric}(\operatorname{grad}f,\operatorname{grad}f),
\end{equation}
and Corollary \ref{co3.1}, we find that
\begin{eqnarray}\label{eq3.4}
  (\operatorname{div}\chi)(\operatorname{grad}f)
   &=&\nonumber \frac{1}{2}\|\operatorname{grad}\|\operatorname{grad}f\|^{2}\|^2
                -(1-\|\operatorname{grad}f\|^{2})\operatorname{Ric}(\operatorname{grad}f,\operatorname{grad}f)\\
   & &\nonumber +\frac{1}{2}\Delta\|\operatorname{grad}f\|^{2}-\|\operatorname{Hess}_f\|^{2}
   -(\Delta f)^2\|\operatorname{grad}f\|^2\\
   & & -\frac{1}{2}(\Delta f)g(\operatorname{grad}f,\operatorname{grad}\|\operatorname{grad}f\|^{2}).
\end{eqnarray}
The Proposition \ref{pro3.1} follows from (\ref{eq3.4}) with $\|\operatorname{grad}f\|=C^{st}<1$.
\end{proof}

\begin{corollary}
Let $(M,g)$ be a Riemannian manifold and $f\in C^\infty(M)$. We assume that
$\operatorname{Ric}\geq0$ and $\|\operatorname{grad}f\|$ is constant $<1$. Then,
$(\operatorname{div}\chi)(\operatorname{grad}f)=0$ if and only if $f$ is an affine function on $(M,g)$.
Moreover, if $\operatorname{Ric}>0$, then $(\operatorname{div}\chi)(\operatorname{grad}f)=0$ if and only if $f$ is
a constant function on $M$.
\end{corollary}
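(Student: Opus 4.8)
The plan is to start from the general formula (\ref{eq3.4}) for $(\operatorname{div}\chi)(\operatorname{grad}f)$ established in the proof of Proposition~\ref{pro3.1}, and specialize it to the hypothesis that $\|\operatorname{grad}f\|$ is a constant strictly less than $1$. Under this assumption $\operatorname{grad}\|\operatorname{grad}f\|^{2}=0$ and $\Delta\|\operatorname{grad}f\|^{2}=0$, so three of the six terms in (\ref{eq3.4}) drop out and one is left with
\begin{eqnarray*}
(\operatorname{div}\chi)(\operatorname{grad}f)
&=&-(1-\|\operatorname{grad}f\|^{2})\operatorname{Ric}(\operatorname{grad}f,\operatorname{grad}f)\\
&&-\|\operatorname{Hess}_f\|^{2}-(\Delta f)^{2}\|\operatorname{grad}f\|^{2}.
\end{eqnarray*}
The whole argument rests on the observation that, under $\operatorname{Ric}\geq0$ and $\|\operatorname{grad}f\|<1$, each of these three summands is nonpositive: the first because $1-\|\operatorname{grad}f\|^{2}>0$ and $\operatorname{Ric}(\operatorname{grad}f,\operatorname{grad}f)\geq0$, the other two because they are manifestly $\leq0$. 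Hence $(\operatorname{div}\chi)(\operatorname{grad}f)=0$ if and only if all three summands vanish simultaneously.

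For the first equivalence I would argue as follows. If $(\operatorname{div}\chi)(\operatorname{grad}f)=0$, then in particular $\|\operatorname{Hess}_f\|^{2}=0$, hence $\operatorname{Hess}_f=0$, i.e. $f$ is affine. Conversely, if $f$ is affine then $\operatorname{Hess}_f=0$ makes the second summand vanish and forces $\Delta f=\operatorname{trace}_g\operatorname{Hess}_f=0$, killing the third. For the first summand I would invoke the Bochner formula (\ref{eq3.3}): since $\|\operatorname{grad}f\|$ is constant its left-hand side is zero, and as $\operatorname{Hess}_f=0$ and $\Delta f=0$ this yields $\operatorname{Ric}(\operatorname{grad}f,\operatorname{grad}f)=0$. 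Equivalently, $\operatorname{Hess}_f=0$ means $\operatorname{grad}f$ is parallel, so $R(\cdot\,,\operatorname{grad}f)\operatorname{grad}f=0$ and the Ricci term vanishes directly. Thus all three summands are zero and $(\operatorname{div}\chi)(\operatorname{grad}f)=0$.

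For the refinement under $\operatorname{Ric}>0$, I would use that vanishing of $(\operatorname{div}\chi)(\operatorname{grad}f)$ forces the first summand to vanish, so $\operatorname{Ric}(\operatorname{grad}f,\operatorname{grad}f)=0$; positivity of $\operatorname{Ric}$ then gives $\operatorname{grad}f=0$, whence $f$ is constant on the (connected) manifold $M$. The converse is immediate, since a constant $f$ yields $\chi=0$. The computation here is entirely routine; the only step requiring a little care is the reverse direction of the first equivalence, where showing that the Ricci term vanishes for an affine $f$ needs either the Bochner identity (\ref{eq3.3}) or the parallelism of $\operatorname{grad}f$, rather than a direct sign argument.
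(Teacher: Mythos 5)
Your proposal is correct and follows exactly the route the paper intends: the corollary is stated without proof immediately after Proposition~\ref{pro3.1}, and the implicit argument is precisely your specialization of (\ref{eq3.4}) to constant $\|\operatorname{grad}f\|$, yielding a sum of three nonpositive terms whose simultaneous vanishing is then analyzed. Your care with the converse direction (using the Bochner identity (\ref{eq3.3}) or parallelism of $\operatorname{grad}f$ to kill the Ricci term when $\operatorname{Hess}_f=0$) fills in the one step the paper leaves entirely tacit.
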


\begin{remark}
If the Riemannian manifold $(M,g)$ has $\operatorname{Ric}\geq0$ and $\|\operatorname{grad}f\|$ is constant $<1$.
Then, 
\begin{itemize}
  \item $(\operatorname{div}\chi)(\operatorname{grad}f)=0$ implies that the identity maps $\tilde{I}_c$ and $\tilde{I}_d$ are harmonic;
  \item $(\operatorname{div}\chi)(\operatorname{grad}f)=0$ if and only if the identity map $\tilde{I}_c$ is harmonic.
\end{itemize}
 \end{remark}

\begin{corollary}
Let $(M,g)$ be a Riemannian manifold and $f\in C^\infty(M)$ such that $\|\operatorname{grad}f\|$ is constant $<1$. We assume that $\operatorname{grad}f$ is a conformal vector field,
i.e., $\operatorname{Hess}_f=\lambda g$ for some smooth function $\lambda$ on $M$. Then
\begin{eqnarray*}
  \operatorname{div}\chi(\cdot,\operatorname{grad}f)
   &=& -(1-\|\operatorname{grad}f\|^{2})\operatorname{Ric}(\operatorname{grad}f,\operatorname{grad}f) \\
   & & -\|\operatorname{Hess}_f\|^{2} +(1-\|\operatorname{grad}f\|^2 )(\Delta f)^2.
\end{eqnarray*}
\end{corollary}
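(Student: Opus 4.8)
The plan is to obtain $(\operatorname{div}\chi)(\operatorname{grad}f)$ by specializing the formula already derived in the proof of Proposition \ref{pro3.1}, rather than starting a fresh computation. Indeed, equation (\ref{eq3.4}) records $(\operatorname{div}\chi)(\operatorname{grad}f)$ in full generality, and the present statement is nothing but its value under the two standing hypotheses, that $\|\operatorname{grad}f\|$ is constant and that $\operatorname{Hess}_f=\lambda g$. So the task is purely one of simplification of (\ref{eq3.4}).

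First I would impose the constancy of $\|\operatorname{grad}f\|$. This forces $\operatorname{grad}\|\operatorname{grad}f\|^{2}=0$ and $\Delta\|\operatorname{grad}f\|^{2}=0$, so the three terms of (\ref{eq3.4}) carrying such a factor drop out, leaving
\begin{equation*}
(\operatorname{div}\chi)(\operatorname{grad}f)=-(1-\|\operatorname{grad}f\|^{2})\operatorname{Ric}(\operatorname{grad}f,\operatorname{grad}f)-\|\operatorname{Hess}_f\|^{2}-\|\operatorname{grad}f\|^{2}(\Delta f)^{2}.
\end{equation*}
The first two terms already coincide with the first two terms of the claim; the coefficient $-(1-\|\operatorname{grad}f\|^{2})$ was produced earlier, in the passage from Corollary \ref{co3.1} to (\ref{eq3.4}), by using the Bochner--Weitzenb\"{o}ck identity (\ref{eq3.3}) to merge the two Ricci contributions. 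Note that the conformal hypothesis has not yet been used; it enters only through the final term.

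It therefore remains to reconcile the Laplacian term: the reduced formula carries $-\|\operatorname{grad}f\|^{2}(\Delta f)^{2}$, whereas the claim is written as $+(1-\|\operatorname{grad}f\|^{2})(\Delta f)^{2}$. This is the step I expect to be the main obstacle, and it is exactly where $\operatorname{Hess}_f=\lambda g$ must be invoked. I would record the conformal identities $\Delta f=m\lambda$, $\|\operatorname{Hess}_f\|^{2}=m\lambda^{2}$, and $\operatorname{Hess}_f(\operatorname{grad}f,\operatorname{grad}f)=\lambda\|\operatorname{grad}f\|^{2}$, and then exploit the compatibility
\begin{equation*}
\lambda\|\operatorname{grad}f\|^{2}=\operatorname{Hess}_f(\operatorname{grad}f,\operatorname{grad}f)=\tfrac{1}{2}\,\operatorname{grad}f\big(\|\operatorname{grad}f\|^{2}\big)=0,
\end{equation*}
which holds precisely because $\|\operatorname{grad}f\|$ is constant. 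Tracking this relation collapses the Laplacian contribution and is what makes the two presentations of the $(\Delta f)^{2}$-coefficient agree, yielding the stated expression. I would be especially careful with the signs and with the constant-norm cancellations at this point, since that is where an arithmetic slip is most likely to creep in.
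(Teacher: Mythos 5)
Your specialization of (\ref{eq3.4}) under the constant-norm hypothesis is correct, but the proof has a genuine gap: you are computing the wrong quantity. The left-hand side of the statement is not $(\operatorname{div}\chi)(\operatorname{grad}f)$ --- note the contrast with Corollary \ref{co3.1}, where the parentheses are placed explicitly --- but $\operatorname{div}\bigl(\chi(\cdot,\operatorname{grad}f)\bigr)$, the divergence of the $1$-form obtained by contracting $\chi$ with $\operatorname{grad}f$. The two are related by the identity (this is what the paper invokes, citing \cite{Peter})
\[
\operatorname{div}\chi(\cdot,\operatorname{grad}f)=(\operatorname{div}\chi)(\operatorname{grad}f)
+\frac{1}{2}\langle\mathcal{L}_{\operatorname{grad}f}g,\chi\rangle ,
\]
and the correction term is exactly the $(\Delta f)^{2}$ discrepancy you flagged as the main obstacle: with $\mathcal{L}_{\operatorname{grad}f}g=2\operatorname{Hess}_f=2\lambda g$ one gets
\[
\frac{1}{2}\langle\mathcal{L}_{\operatorname{grad}f}g,\chi\rangle=\lambda\operatorname{trace}_g\chi
=\lambda\, m\,\Delta f=(\Delta f)^{2},
\]
using $\operatorname{trace}_g\chi=m\,\Delta f$ and $\Delta f=m\lambda$. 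Adding this to your reduced expression converts $-\|\operatorname{grad}f\|^{2}(\Delta f)^{2}$ into $+(1-\|\operatorname{grad}f\|^{2})(\Delta f)^{2}$, which is the stated formula. So the coefficient change is not a rewriting of your answer; it is the contribution of a term your computation never produces.

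Your attempted reconciliation via $\lambda\|\operatorname{grad}f\|^{2}=\operatorname{Hess}_f(\operatorname{grad}f,\operatorname{grad}f)=0$ does not supply this missing term; what it actually shows is that the hypotheses are degenerate. Indeed, if $\|\operatorname{grad}f\|$ is a nonzero constant then $\lambda\equiv0$, and if $\|\operatorname{grad}f\|\equiv0$ then $f$ is locally constant, so $\operatorname{Hess}_f=\lambda g=0$ forces $\lambda\equiv0$ as well; either way $\operatorname{Hess}_f\equiv0$ and $\Delta f\equiv0$, so every term you are trying to match vanishes identically. That observation does make your reduced formula numerically equal to the stated one (and, since then $\langle\operatorname{Hess}_f,\chi\rangle=0$, it even makes the two readings of the left-hand side coincide), but it is an argument by triviality: it does not derive the coefficient $(1-\|\operatorname{grad}f\|^{2})$, and it rests on a reading of the left-hand side different from the one the statement intends. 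To repair the proof, keep your specialization of (\ref{eq3.4}) and add the Lie-derivative correction above; that is precisely the paper's argument.
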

\begin{proof}
The proof follows from equation (\ref{eq3.4}) and the formula (see \cite{Peter})
$$\operatorname{div}\chi(\cdot,\operatorname{grad}f)=(\operatorname{div}\chi)(\operatorname{grad}f)
+\frac{1}{2}\langle\mathcal{L}_{\operatorname{grad}f}g,\chi\rangle,$$
with $\mathcal{L}_{\operatorname{grad}f}g=2\operatorname{Hess}_f$ and $\Delta f=m\,\lambda$.
\end{proof}

\subsection*{Conflict of interest}
The author declares no conflict of interest.

\subsection*{Data availability}
Not applicable.

\end{document}